 \patchcmd\Gread@eps{\@inputcheck#1 }{\@inputcheck"#1"\relax}{}{}
\def \R {\mathbb{R}}
\def \dist {\mathrm{dist}} 
\newtheorem{theorem}{Theorem} 
\newtheorem{lemma}{Lemma}
\newtheorem{proposition}{Proposition}  
\newtheorem{definition}{Definition}
\newtheorem{remark}{Remark}
\numberwithin{equation}{section}
\title[Infinity two-phase free boundary problem]{On a  two-phase free boundary problem ruled by the infinity Laplacian}
\author[D.J. Ara\'ujo]{Dami\~ao J. Ara\'ujo}
\address{Department of Mathematics, Universidade Federal da Para\'iba, 58059-900, Jo\~ao Pessoa-PB, Brazil}{}
\email{araujo@mat.ufpb.br}
\author[E. V. Teixeira]{Eduardo V. Teixeira}
\address{Department of Mathematics, University of Central Florida, 32816, Orlan\-do-FL, USA}{}
\email{eduardo.teixeira@ucf.edu}
\author[J.M. Urbano]{Jos\'e Miguel Urbano}
\address{University of Coimbra, CMUC, Department of Mathematics, 3001-501 Coimbra, Portugal \& Department of Mathematics, Universidade Federal da Pa\-ra\'\i \-ba, 58059-900 Jo\~ao Pessoa, PB-Brazil}
\email{jmurb@mat.uc.pt}
\begin{document}

\subjclass[2010]{Primary 35B65. Secondary 35R35, 35J60, 35J70, 35D40}

\keywords{Optimal regularity, free boundary problems, infinity Laplacian, viscosity solutions}

\begin{abstract} 
In this paper we consider a two-phase free boundary problem ruled by the infinity Laplacian. Our main result states that bounded viscosity solutions in $B_1$ are universally Lipschitz continuous in $B_{1/2}$, which is the optimal regularity for the problem. We make a new use of the Ishii-Lions' method, which works as a surrogate for the lack of a monotonicity formula and is bound to be applicable in related problems.
\end{abstract}  

\date{\today}

\maketitle

\section{Introduction} \label{sct intro}

Let $n \ge 2$ and $\Omega \subset \mathbb{R}^n$ be a bounded domain. In this paper we are interested in the singular free boundary problem, ruled by the infinity Laplacian,

\begin{equation}\label{eq}
\left\{ 
\begin{array}{rcll} 
-\Delta_{\scriptscriptstyle \infty} u & = & {f_+} (x)& \ {\rm in} \  \{u>0\} { \, \cap \, \Omega} \\[0.2cm]
-\Delta_{\scriptscriptstyle \infty} u & = & {f_-} (x)& \ {\rm in} \  \{u<0\}{  \, \cap \, \Omega} \\[0.2cm]
\max\{u_\nu^+,u_\nu^-\} & \le & \Lambda & \ \mbox{on} \  \mathcal{R}(u),\\  
\end{array}
\right.  
\end{equation}
\medskip
where $\Lambda >0$, {  $f_+,f_- \in L^\infty (\Omega)$} are given, 
$$
\mathcal{R}(u):= \partial(\{u>0\} \cup \{u<0\}) {  \, \cap \, \Omega},
$$
and $u_\nu^{+}$ and $u_\nu^{-}$ represent the corresponding normal derivatives in a very weak sense to be described later. 
The governing operator is the so called infinity Laplacian,
$$
	 \Delta_\infty v :=  \left\langle D^2v D v, Dv \right\rangle.
$$

Of particular interest is the case when $- \Delta_\infty u$ takes two different constant (nonzero) values in each phase, say $f_{+} = - \chi_{\{ u > 0 \}}$ and $f_{-} = + \chi_{\{ u < 0 \}}$; compare it with the classical Prandtl-Batchelor theorem in fluid dynamics (cf. \cite{A,B}).

The main result we obtain is that {\it any} bounded viscosity solution of \eqref{eq}, in a sense to be detailed, is locally Lipschitz continuous.  We stress that Lipschitz estimates are sharp for such a free boundary problem, as simple examples show. We also note that any $C^2$ function with smooth zero-level set satisfies the  free boundary problem \eqref{eq}, for some $f_{+}$ and $f_{-}$. On the other hand, since $\mathcal{R}(u)$ is unknown, the gradient control to be proven in this article is far from being obvious or easy to obtain. For related issues, where specific bounds are prescribed on unknown sets and a PDE is given in the complementary regions, we refer, for example, to \cite{FS}.  

Indeed, while it is clear that a function satisfying $-\Delta_\infty u =f_{+} \chi_{\{u> 0 \}} + f_{-}\chi_{\{u< 0 \}} $ in $  \{u>0 \} \cup   \{u<0 \} $ is locally Lipschitz continuous in its phases, the corresponding estimates degenerate as one approaches their (unknown) boundaries. Thus, the main difficulty when proving the optimal regularity for our problem is the Lipschitz regularity across the free boundary.  We also comment that, in the case where $f_\pm = 0$, Lipschitz regularity immediately follows as a consequence of the findings in \cite{CEG}, as both $u^{+}$ and $u^{-}$ are then viscosity subsolutions of $-\Delta_\infty v = 0$.  

Our strategy for proving universal Lipschitz estimates for the two-phase problem \eqref{eq} relies on doubling variables, in the spirit of the Ishii-Lions' method \cite{IL}, in a fashion carefully designed to match the structure of the infinity Laplacian. See \cite{EY, JLM, LW} for more on this highly degenerate operator and also \cite{RTU}, for another free boundary problem involving it.

The paper is organized as follows. In the next section, we define precisely what we mean by a solution of \eqref{eq} and state our main result. The rest of the paper is devoted to its proof; in section 3 we derive pointwise estimates for interior maxima of a certain function, which will be instrumental in the sequel; section 4 brings the definition of an appropriate barrier; the proof is carried out in section 5 and ultimately amounts to the analysis of an alternative. 

\section{Definition of solution and main result}\label{regularity}

We will consider very weak solutions of problem \eqref{eq} for which we nevertheless obtain optimal regularity results. The appropriate notion is that of  viscosity solution and we need to first recall the definition of jet, given, e.g., in \cite{CIL}.

{Let $\Omega \subset \mathbb{R}^n$ be a bounded domain}, $u \colon \Omega \subset \R^n \rightarrow \R$ and $\hat{x} \in \Omega$. Denoting by $\mathcal{S} (n)$ the set of all $n\times n$ symmetric matrices, the second-order superjet of $u$ at $\hat{x}$, ${J}_{\Omega}^{2,+}u(\hat{x})$, is the set of all ordered pairs $(p,X) \in {\R}^n \times \mathcal{S} (n)$ such that
$$
u(x) \leq u(\hat{x}) + \langle p , x-\hat{x} \rangle + \frac12 \langle X (x-\hat{x}), x-\hat{x} \rangle + o\left( |x-\hat{x}|^2\right)
$$
as $\Omega \ni x \to \hat{x}$. The second-order subjet of $u$ at $\hat{x}$ is defined by ${J}_{\Omega}^{2,-}u(\hat{x}) := -{J}_{\Omega}^{2,+}(-u)(\hat{x})$. For $\hat{x} \in \overline{\Omega}$, we also denote by
$\overline{J}_{\Omega}^{2,\pm}u(\hat{x})$ the set of all pairs $(p,X) \in {\R}^n \times \mathcal{S} (n)$ for which there exist sequences $x_j \in \Omega$ and $(p_j,X_j) \in {J}_{\Omega}^{2,\pm}u(x_j)$, such that { $(x_j,p_j,X_j)\to (\hat{x},p,X)$, as $j \to \infty$}. 

We are now ready to disclose in what sense the equation and the free boundary condition in \eqref{eq} are to be interpreted. For the sake of simplicity, we shortly denote  $\{u>0\}:=\{x \in \Omega \, | \, u(x)>0\}$ (accordingly for $\{u<0\}$).  

\begin{definition}\label{viscositysol}
An  {upper} semi-continuous function $u$ is a \textit{viscosity subsolution of} \eqref{eq} in $\Omega$ if the following two conditions hold:

\begin{itemize}

\item[(i)] for each $x \in \{u>0\}$ (resp. $x \in \{u<0\}$) and $\left( \xi,M \right) \in { {J}_{\Omega}^{2,+}u(x)}$, we have
$$
-\langle M\xi,\xi \rangle \leq   {f_+} \quad (\mbox{resp.} -\langle M\xi,\xi \rangle \leq  {f_-});
$$
 
\item[(ii)] for each $x \in \mathcal{R}(u)$ and $(\xi,M) \in  {J}_{\Omega}^{2,+}u(x)$, {with $\xi \neq 0$}, we have
$$ 
u \left( x-t \frac{\xi}{|\xi|} \right) \geq -\Lambda t+o(t), \quad {\mbox{as } t \to 0^+}.
$$
\end{itemize}

A {lower} semi-continuous function $u$ is a \textit{viscosity supersolution of} \eqref{eq} in $\Omega$ if the following two conditions hold:

\begin{itemize} 

\item[(i)] for each $x \in \{u>0\}$ (resp. $x \in \{u<0\}$) and $\left( \xi,M \right) \in { {J}_{\Omega}^{2,-}u(x)}$, we have
$$
-\langle M\xi,\xi \rangle \geq  {f_+} \quad (\mbox{resp.} -\langle M\xi,\xi \rangle \geq  {f_-});
$$

\item[(ii)] for each $x \in \mathcal{R}(u)$ and $\left( \xi,M \right) \in {J}_{\Omega}^{2,-}u(x)$, with { $\xi \neq 0$}, we have
$$
u\left( x+t \frac{\xi}{|\xi|} \right) \leq \Lambda t + o(t), \quad { \mbox{as } t \to 0^+}.
$$
\end{itemize}

{If a continuous function $u$ is both a viscosity subsolution and a viscosity supersolution we say $u$ is a \textit{viscosity solution of} \eqref{eq} in $\Omega$.}
\end{definition} 

\bigskip

%
%
%
%

\begin{remark}
The equation is interpreted in the usual way in the context of the infinity Laplacian. Now, if $x \in \partial \{ u > 0 \}$ is a point of differentiability and, say, {$(\xi, M) \in {J}_{\Omega}^{2,-}u(x)$},
then
$$ 
  u_\nu^+ = \lim_{t\to 0^+} \frac{u \left ( x + t \frac{\xi}{|\xi|}\right ) }{t} = \nabla u(x) \cdot \frac{\xi}{|\xi|} = |\xi|=|\nabla u(x)|. 
$$
On the other hand,  free boundary condition (ii), along with the subjet estimate, gives
$$
	 t  |\xi| + \frac12 t^2 \left  \langle M \frac{\xi}{|\xi|},  \frac{\xi}{|\xi|}  \right \rangle + o\left(t^2\right)  \le u\left( x+t\frac{\xi}{|\xi|} \right) \leq \Lambda t + o(t).
$$
Dividing the above inequality by $t$ and letting $t\to 0^+$ yields
$$ 
	{ |\nabla u(x)| =|\xi| \leq \Lambda}.
$$
Thus, the interpretation of the free boundary condition given above is a (very) weak representative of the corresponding flux balance in \eqref{eq}.
\end{remark}

Hereafter in this paper, we denote by $B_r(x)$ the euclidean $n$-dimensional ball with radius $r>0$ centered at $x \in \mathbb{R}^n$. By simplicity, we also denote $B_r:=B_r(0)$.

We can now state the main theorem of this article, the optimal regularity for viscosity solutions of \eqref{eq}. 

\begin{theorem}[Lipschitz regularity]\label{mainthm} 

Any bounded viscosity solution $u$ of \eqref{eq}, in the sense of Definition \ref{viscositysol}, is locally Lipschitz continuous. Moreover, for any subdomain $\Omega' \Subset \Omega$, there exist universal constants $C>0$, depending only on $n$ and $\dist(\Omega',\partial\Omega)$, and $L >1$, depending only on $\|f_+\|_{\infty}$, $\|f_-\|_{\infty}$ and $\Lambda$,  such that 
$$\sup_{x,y \in \Omega'} \frac{\left| u(x)-u(y) \right|}{|x-y|} \leq C \left( L + \|u\|_{L^\infty(\Omega)} \right) .$$
\end{theorem}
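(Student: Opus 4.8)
The plan is to prove the estimate locally on a fixed ball and then cover $\Omega'$ by finitely many such balls. Fix $B_{2R}(z_0)\Subset\Omega$ with $R\simeq\dist(\Omega',\partial\Omega)$ and argue by contradiction: assuming the Lipschitz seminorm of $u$ on $B_R(z_0)$ exceeds the asserted bound, I would introduce the doubled function
\begin{equation*}
\Phi(x,y)=u(x)-u(y)-L\,\phi(|x-y|)-\frac{\Gamma}{2}\left(|x-z_0|^2+|y-z_0|^2\right),
\end{equation*}
where $\phi$ is a strictly concave modulus with $\phi(0)=0$, $\phi'(0^+)=1$ and $\phi''<0$ (for instance $\phi(s)=s-\omega_0 s^{3/2}$), and $L>1$, $\Gamma>0$ are to be chosen. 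The quadratic penalization forces any maximum of $\Phi$ to be attained at an interior point $(\bar x,\bar y)$, and calibrating $\Gamma$ against $\|u\|_{L^\infty(\Omega)}$ and $R$ is exactly what produces the factor $(L+\|u\|_{L^\infty(\Omega)})$ and the dependence of $C$ on $\dist(\Omega',\partial\Omega)$. A positive maximum together with the contradiction hypothesis moreover guarantees $\bar x\neq\bar y$.

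At such an interior maximum I would apply the theorem on sums to obtain
\begin{equation*}
\left(q_x,X\right)\in\overline{J}^{2,+}_{\Omega}u(\bar x),\qquad\left(q_y,Y\right)\in\overline{J}^{2,-}_{\Omega}u(\bar y),
\end{equation*}
with $q_x=L\phi'(r)\hat e+\Gamma(\bar x-z_0)$ and $q_y=L\phi'(r)\hat e-\Gamma(\bar y-z_0)$, where $r=|\bar x-\bar y|$ and $\hat e=(\bar x-\bar y)/r$, together with the accompanying matrix inequality for $(X,Y)$. The decisive point, tailored to the infinity Laplacian, is that testing this matrix inequality in the (near-)gradient direction $\hat e$ yields
\begin{equation*}
\langle X\hat e,\hat e\rangle-\langle Y\hat e,\hat e\rangle\le 4L\,\phi''(r)+2\Gamma+o(1),
\end{equation*}
so that the strict concavity $\phi''<0$ supplies a good negative term of order $-L\,|\phi''(r)|$. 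This compensates for the fact that a linear modulus would be annihilated along $\hat e$ and is, I expect, the content of the pointwise estimates for interior maxima derived in Section 3.

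The proof then splits into an alternative according to the position of $\bar x$ and $\bar y$; note that both cannot lie on $\mathcal{R}(u)$, since there $u=0$ and $\Phi$ would be nonpositive. If both points lie in the open set $\{u>0\}\cup\{u<0\}$, the equation holds at each of them, and combining the subsolution inequality at $\bar x$ with the supersolution inequality at $\bar y$ and the displayed matrix bound gives, to leading order in $L$,
\begin{equation*}
-2\max\{\|f_+\|_\infty,\|f_-\|_\infty\}\le\langle Xq_x,q_x\rangle-\langle Yq_y,q_y\rangle\le\left(L\phi'(r)\right)^2\left(4L\phi''(r)+2\Gamma\right)+o\!\left(L^3\right),
\end{equation*}
whose right-hand side tends to $-\infty$ as $L\to\infty$, a contradiction once $L$ exceeds a constant depending only on $\|f_\pm\|_\infty$. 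The remaining case is when exactly one point, say $\bar x$, lies on $\mathcal{R}(u)$; then $u(\bar x)=0$ and, the maximum being positive, $\bar y$ sits in the opposite phase, so the equation is still available at $\bar y$ but not at $\bar x$. Here I would feed the very weak flux condition (ii) into the barrier constructed in Section 4 to recover the missing one-sided second-order information at $\bar x$, and then close the same contradiction.

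I expect this free boundary case to be the crux of the argument: the condition $\max\{u_\nu^+,u_\nu^-\}\le\Lambda$ is only a one-sided, first-order statement in a very weak viscosity sense, and converting it into the second-order estimate demanded by the infinity-Laplacian computation is precisely where the barrier must do the work of the missing monotonicity formula, with $L$ then also depending on $\Lambda$. Once the alternative is ruled out in both cases, one concludes that $\Phi\le0$ on $B_R(z_0)\times B_R(z_0)$, which delivers the local Lipschitz bound with the stated constants, and a finite covering of $\Omega'$ completes the proof.
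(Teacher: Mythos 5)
Your overall architecture --- doubling variables with a strictly concave modulus plus a quadratic penalization calibrated to $\|u\|_{L^\infty}$, the theorem on sums tested along the gradient directions to extract a negative term of order $L^3\phi''\phi'^2$, and an alternative on the location of the maximum points --- is exactly the paper's. The interior case (both points in $\{u>0\}\cup\{u<0\}$) is handled correctly and as in the paper, and the observations that $\bar x\neq\bar y$ and that both points cannot lie on $\mathcal{R}(u)$ also match.

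The gap is in the case you yourself call the crux, when exactly one point, say $\bar y$, lies on $\mathcal{R}(u)$. Your plan --- ``feed the flux condition into the barrier constructed in Section 4 to recover the missing one-sided second-order information at the free boundary point'' --- is not the mechanism, and as stated it is not a proof: the ``barrier'' of Section 4 is nothing but the modulus $\omega(t)=t-\kappa t^{1+\theta}$ itself, an ODE estimate guaranteeing $aL^3\omega''\omega'^2+bL^2\omega'^2<-K$ together with $\omega'\ge 1/2$; there is no separate barrier at the free boundary and no second-order information is recovered there. What actually closes this case is a first-order argument. The Ishii--Lions inequality combined with the equation at the interior point $\bar x$ forces $-\langle Yq_y,q_y\rangle<-K/2<0$, i.e.\ the Hessian at the free-boundary point is strictly positive in the gradient direction. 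This sign does two jobs: first, it shows that the approximating points $y_k$ --- needed because $(q_y,Y)$ only lies in the closure $\overline{J}^{2,-}$, a technical step your sketch does not address --- must themselves lie on $\mathcal{R}(u)$, since otherwise the equation would force $\min\{f_+(y_k),f_-(y_k)\}\le -K/4$; second, it makes the quadratic term in the subjet expansion $u(y_k+t\xi_k/|\xi_k|)\ge t|\xi_k|+\tfrac{t^2}{2|\xi_k|^2}\langle M_k\xi_k,\xi_k\rangle+o(t^2)$ harmless, so the weak flux condition $u(y_k+t\xi_k/|\xi_k|)\le\Lambda t+o(t)$ yields, after dividing by $t$ and passing to the limit, simply $|\xi_y|\le\Lambda$. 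Since $|q_y|\ge L\omega'(r)-2\varrho\ge L/2-2\varrho$ by construction, taking $L$ large compared with $\Lambda$ gives the contradiction. Without identifying this sign condition and the resulting confrontation between $|\xi|\le\Lambda$ and $|\xi|\ge L/2-2\varrho$, the free-boundary case --- and hence the dependence of $L$ on $\Lambda$ --- is not established.
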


\section{Pointwise estimates for interior maxima} 

In this section we start preparing for the proof of Theorem \ref{mainthm}, by deriving pointwise estimates involving the intrinsic structure of the infinity Laplacian at interior maximum points of a certain continuous function. Such a powerful analytic tool will be used, so to speak, as a surrogate for the absence of a monotonicity formula in this non-variational two-phase free boundary problem.

\begin{lemma}\label{ishii}
Let $v \in C(B_1)$, $0\leq \omega \in C^2(\mathbb{R}^+)$ and set
$$w(x,y):= v(x)-v(y) \quad \mbox{and} \quad \varphi(x,y):=L  \omega(|x-y|)+\varrho \left( |x|^2+|y|^2 \right),$$
with $L,\varrho$ positive constants. If the function $w-\varphi$ attains a maximum at $(x_0,y_0) \in B_{\frac{1}{2}}\times B_{\frac{1}{2}}$, then, for each $\varepsilon>0$, there exist $M_{x},M_{y} \in \mathcal{S} (n)$, such that
\begin{equation}\label{jets1} 
\left( D_x\varphi(x_0,y_0),M_{x} \right) \in \overline{J}^{2,+}_{B_{1/2}}v(x_0),
\end{equation}
\begin{equation}\label{jets2} 
(-D_y\varphi(x_0,y_0),M_{y}) \in \overline{J}^{2,-}_{B_{1/2}}v(y_0),
\end{equation}
and the estimate
$$\left\langle M_{x} D_x\varphi(x_0,y_0),D_x\varphi(x_0,y_0) \right\rangle - \langle M_{{y}}D_y\varphi(x_0,y_0),D_y\varphi(x_0,y_0) \rangle $$
\begin{equation}\label{comp3}
\leq 4\,L\omega''(\rho)\left(L\omega'(\rho)+\varrho \rho \right)^2+16\varrho \left(L^2\omega'(\rho)^2+\varrho^2 \right)
\end{equation}
holds, where $\rho = |x_0 - y_0|$.
\end{lemma}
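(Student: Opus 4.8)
The plan is to read the statement off from the Crandall--Ishii Theorem on Sums (Theorem~3.2 in \cite{CIL}), applied to the upper semicontinuous functions $u_1:=v$ and $u_2:=-v$ together with the $C^2$ test function $\varphi$. Indeed, $w-\varphi = u_1(x)+u_2(y)-\varphi(x,y)$ has a maximum at $(x_0,y_0)$, and provided $\rho=|x_0-y_0|>0$ (so that $\varphi$ is genuinely twice differentiable there), the theorem furnishes, for each $\varepsilon>0$, matrices $X,Y\in\mathcal{S}(n)$ with $(D_x\varphi(x_0,y_0),X)\in\overline{J}^{2,+}_{B_{1/2}}v(x_0)$ and $(D_y\varphi(x_0,y_0),Y)\in\overline{J}^{2,+}_{B_{1/2}}(-v)(y_0)$, subject to the matrix inequality $\mathrm{diag}(X,Y)\le A+\varepsilon A^2$ with $A:=D^2\varphi(x_0,y_0)$. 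Putting $M_x:=X$ and $M_y:=-Y$ and recalling that $\overline{J}^{2,+}(-v)=-\overline{J}^{2,-}v$ yields \eqref{jets1}--\eqref{jets2} at once.

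The core of the argument is then purely computational. Writing $p:=D_x\varphi(x_0,y_0)$, $q:=D_y\varphi(x_0,y_0)$ and $\zeta:=(p,q)$, the left-hand side of \eqref{comp3} equals $\langle Xp,p\rangle+\langle Yq,q\rangle=\langle\mathrm{diag}(X,Y)\zeta,\zeta\rangle$, which the matrix inequality bounds by $\langle A\zeta,\zeta\rangle+\varepsilon|A\zeta|^2$. Setting $\hat e:=(x_0-y_0)/\rho$ and $B:=\omega''(\rho)\,\hat e\otimes\hat e+\tfrac{\omega'(\rho)}{\rho}(I-\hat e\otimes\hat e)$, I would compute
\begin{equation*}
A=L\begin{pmatrix}B&-B\\-B&B\end{pmatrix}+2\varrho\,I_{2n},\qquad p=L\omega'(\rho)\hat e+2\varrho x_0,\quad q=-L\omega'(\rho)\hat e+2\varrho y_0,
\end{equation*}
so that $\langle A\zeta,\zeta\rangle=L\langle B(p-q),p-q\rangle+2\varrho(|p|^2+|q|^2)$. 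The decisive simplification is that $p-q=2\bigl(L\omega'(\rho)+\varrho\rho\bigr)\hat e$ is parallel to $\hat e$; hence the tangential block of $B$ is annihilated and $L\langle B(p-q),p-q\rangle=4L\omega''(\rho)\bigl(L\omega'(\rho)+\varrho\rho\bigr)^2$, reproducing the first term of \eqref{comp3} exactly. For the second term I would use $|p|^2,|q|^2\le 2L^2\omega'(\rho)^2+8\varrho^2|x_0|^2$ (resp. with $y_0$) together with $|x_0|,|y_0|<\tfrac12$ to obtain $2\varrho(|p|^2+|q|^2)\le 8\varrho\bigl(L^2\omega'(\rho)^2+\varrho^2\bigr)$.

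To finish, since $A$ and $\zeta$ are fixed once the maximum point is, the error term $\varepsilon|A\zeta|^2$ can be made no larger than $8\varrho\bigl(L^2\omega'(\rho)^2+\varrho^2\bigr)$ by taking $\varepsilon$ small enough, and the two contributions combine to give precisely the factor $16$ in \eqref{comp3}. The step I expect to be most delicate is the assembly of the block Hessian $A$ with the correct signs on its off-diagonal blocks, and the clean observation that the potentially singular tangential term $\omega'(\rho)/\rho$ never survives in the final bound because $p-q$ points along $\hat e$; everything else is routine bookkeeping, modulo the standing assumption $\rho>0$ that legitimizes differentiating $|x-y|$ at the maximum.
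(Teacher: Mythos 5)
Your proposal is correct and follows essentially the same route as the paper: both invoke the Crandall--Ishii/Ishii--Lions theorem on sums with the block test matrix $A+\varepsilon A^2$, exploit that $D_x\varphi-D_y\varphi$ is parallel to $x_0-y_0$ so only the $\omega''$ part of the Hessian of $\omega(|x-y|)$ survives, bound $|D_x\varphi|^2+|D_y\varphi|^2$ by $4\bigl(L^2\omega'(\rho)^2+\varrho^2\bigr)$ using $|x_0|,|y_0|,\rho\le 1/2$, and absorb the $\varepsilon A^2$ error by choosing $\varepsilon$ small to produce the factor $16$. Your explicit flagging of the standing assumption $\rho>0$ is a point the paper leaves implicit (it is secured later, in the application, from \eqref{comp1}).
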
 

\begin{proof}
Under the hypothesis of the lemma, let us consider a local maximum,  $(x_0,y_0) \in B_{\frac{1}{2}}\times B_{\frac{1}{2}}$, of $w-\varphi$. By \cite[Theorem 3.2]{IL}, for each $\varepsilon>0$, there exist matrices $M_{x},M_{y} \in \mathcal{S} (n)$ such that \eqref{jets1} and \eqref{jets2} hold, and
$$
\left(
\begin{array}{cc}
M_{x} & 0 \\
0 & -M_{y} 
\end{array}
\right)
 \leq A+\epsilon A^2
$$
for
$$
A:=\left(
\begin{array}{cc}
M_\omega & -M_\omega \\
-M_\omega & M_\omega 
\end{array}
\right) + 2\varrho \,  I_{2n\times 2n},
$$
where
\begin{eqnarray}
M_\omega & :=  & L\,\omega''(|x_0-y_0|)\frac{(x_0-y_0) \otimes (x_0-y_0)}{|x_0-y_0|^2} \nonumber \\ 
& & +L\frac{\omega'(|x_0-y_0|)}{|x_0-y_0|}\left(I - \frac{(x_0-y_0) \otimes (x_0-y_0)}{|x_0-y_0|^2}\right). \label{hessian} 
\end{eqnarray}
In particular, we have
$$\left\langle M_{x} D_x\varphi(x_0,y_0) ,D_x\varphi(x_0,y_0) \right\rangle - \langle M_{{y}}D_y\varphi(x_0,y_0),D_y\varphi(x_0,y_0) \rangle $$
 $$\leq \left\langle M_\omega (D_x\varphi(x_0,y_0)-D_y\varphi(x_0,y_0)), D_x\varphi(x_0,y_0)-D_y\varphi(x_0,y_0) \right\rangle $$
\begin{equation}\label{matrix} +2\varrho \left( |D_x\varphi(x_0,y_0)|^2+|D_y\varphi(x_0,y_0)|^2 \right) + \epsilon \lambda,
\end{equation}
where 
$$ \lambda:= \left\langle A^2 \left( D_x\varphi(x_0,y_0), D_y\varphi(x_0,y_0) \right) , \left( D_x\varphi(x_0,y_0), D_y\varphi(x_0,y_0) \right) \right\rangle.$$

Now, for $\nu:=\frac{x_0-y_0}{|x_0-y_0|}$, we have
\begin{equation}\label{grad1}
D_x\varphi(x_0,y_0) =  L \, \omega'(\rho) \nu + 2 \varrho x_0 
\end{equation}
and
\begin{equation}\label{grad2}
 -D_y\varphi(x_0,y_0) = L \, \omega'(\rho) \nu - 2\varrho y_0,
\end{equation}
and thus, with $\iota=2(L\omega'(\rho)\rho^{-1}+\varrho)$, we have
$$D_x \varphi(x_0,y_0)-D_y \varphi(x_0,y_0)=\iota (x_0-y_0).$$
It then follows from \eqref{hessian} that
$$\left\langle M_\omega (D_x\varphi(x_0,y_0)-D_y\varphi(x_0,y_0)), D_x\varphi(x_0,y_0)-D_y\varphi(x_0,y_0) \right\rangle  $$
$$=  \iota^2 \langle M_\omega (x_0-y_0),(x_0-y_0) \rangle= \iota^2 L\omega''(\rho)\rho^2$$
\begin{equation}\label{comp4}
=  4L \omega''(\rho)\left(L\omega'(\rho)+\varrho \rho \right)^2.
\end{equation} 
Moreover, observe that
\begin{eqnarray*}
|D_x\varphi(x_0,y_0)|^2 + |D_y\varphi(x_0,y_0)|^2  = 2L^2\omega'(\rho)^2 & + & 4L\varrho\omega'(\rho)\rho \\
& + & 4\varrho^2(|x_0|^2 +|y_0|^2).
\end{eqnarray*}
Using Cauchy's inequality, we obtain the estimate
$$
4L\varrho\omega'(\rho)\rho \leq \frac{(2L\omega'(\rho))^2}{2}+\frac{(2\varrho \rho)^2}{2} = 2L^2\omega'(\rho)^2+2\varrho^2\rho^2
$$
and then
\begin{eqnarray*}
|D_x\varphi(x_0,y_0)|^2 + |D_y\varphi(x_0,y_0)|^2  \leq 4L^2\omega'(\rho)^2 & + &  2\varrho^2\rho^2 \\
& + & 4\varrho^2(|x_0|^2 +|y_0|^2).
\end{eqnarray*}
Since $\max\{|x_0|,|y_0|,\rho\} \leq 1/2$, we obtain
\begin{equation}\label{comp6}
|D_x\varphi(x_0,y_0)|^2+|D_y\varphi(x_0,y_0)|^2  \leq  4(L^{\,2}\omega'(\rho)^2+\varrho^2).
\end{equation} 

Finally, if $\lambda > 0$, choose 
$$\epsilon = \frac{8\varrho \left(L^2\omega'(\rho)^2+\varrho^2 \right)}{\lambda},$$
otherwise choose $\epsilon$ freely. 
Using \eqref{comp4} and \eqref{comp6} in \eqref{matrix}, together with this choice of $\epsilon$, we obtain \eqref{comp3} and the proof is complete.

\end{proof}

\section{Building an appropriate barrier}

In this section, we derive an ordinary differential estimate which will be used to derive geometric properties related to  problem \eqref{eq}. For positive constants ${\kappa}$ and $\theta$, to be chosen later, we consider the barrier function 
\begin{equation}\label{barrierfunction}
\omega(t)=t-{\kappa} \,t^{1+\theta} \quad \mbox{for} \quad 0<t <1.
\end{equation}

\begin{proposition}\label{barrier}
Let $a$ and $b$ be positive parameters. Given $K>0$, there exist positive constants $\overline L$, $\kappa$ and $\theta$, depending only on $K$, the parameters $a$ and $b$, and universal constants, such that 
\begin{equation}\label{keq}
{ aL^3 \omega''(t)\omega'(t)^2 + bL^2 \omega'(t)^2 < -K},
\end{equation}
for all $L \geq \overline L$. Moreover, there holds
\begin{equation}\label{keq2}
\omega(t)>0, \quad \dfrac{1}{2} \leq \omega'(t) \leq 1 \quad \mbox{and} \quad \omega''(t) < 0,
\end{equation}
for any $0<t<1$. 
\end{proposition}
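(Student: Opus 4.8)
The plan is to compute the first two derivatives of the barrier $\omega(t)=t-\kappa\,t^{1+\theta}$ explicitly and then read off both assertions directly from the resulting formulas. We have
$$\omega'(t)=1-\kappa(1+\theta)t^\theta, \qquad \omega''(t)=-\kappa\theta(1+\theta)t^{\theta-1}.$$
The qualitative bounds \eqref{keq2} follow from a single normalization. First I would impose $\kappa(1+\theta)\le\tfrac12$; since $0<t<1$ forces $t^\theta<1$, this yields $\omega'(t)\in[\tfrac12,1]$ immediately, and then $\omega(t)=t\bigl(1-\kappa t^\theta\bigr)>0$ follows from $\kappa<1$. The sign $\omega''(t)<0$ holds automatically for any positive $\kappa,\theta$, so \eqref{keq2} costs essentially nothing.

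For the main inequality \eqref{keq} I would rewrite the left-hand side as $\omega'(t)^2\bigl(aL^3\omega''(t)+bL^2\bigr)$. The mechanism is clear: the first term is negative because $\omega''<0$, and it carries the higher power $L^3$, so for $L$ large it should overwhelm the positive second term. To make this quantitative and uniform in $t$, I would estimate the two pieces separately using $\tfrac14\le\omega'(t)^2\le1$: since $aL^3\omega''(t)<0$, the good term obeys $aL^3\omega''(t)\omega'(t)^2\le\tfrac14\,aL^3\omega''(t)$, while the bad term obeys $bL^2\omega'(t)^2\le bL^2$.

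The crux is to keep $\omega''$ bounded away from zero on all of $(0,1)$, since the factor $t^{\theta-1}$ could otherwise let the good term degenerate near an endpoint; this is the only genuine obstacle, and it is exactly the reason the exponent must be restricted. Here I would require $0<\theta\le1$, so that $t^{\theta-1}\ge1$ throughout $(0,1)$ and hence $\omega''(t)\le-\kappa\theta(1+\theta)$ uniformly. Combining this with the bounds above gives
$$aL^3\omega''(t)\omega'(t)^2+bL^2\omega'(t)^2\le-\tfrac14\,\kappa\theta(1+\theta)\,aL^3+bL^2.$$
I would fix $\kappa$ and $\theta$ once and for all (for instance $\theta=1$ and $\kappa=\tfrac14$, so that $\kappa(1+\theta)=\tfrac12$ and the troublesome factor $t^{\theta-1}$ is constantly $1$), and then choose $\overline L$ large, depending only on $a$, $b$ and $K$, so that $\tfrac14\kappa\theta(1+\theta)aL^3-bL^2>K$ for every $L\ge\overline L$; this delivers \eqref{keq}. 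Beyond the endpoint analysis of $t^{\theta-1}$, everything reduces to the elementary fact that a cubic in $L$ eventually dominates a quadratic.
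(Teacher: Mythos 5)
Your proof is correct and follows essentially the same route as the paper: restrict $\theta\le 1$ so that $t^{\theta-1}\ge 1$ gives a uniform negative upper bound on $\omega''$, take $\kappa$ small so that $\tfrac12\le\omega'\le 1$, and then let the cubic term in $L$ dominate the quadratic one. The only cosmetic difference is that you bound $\omega'(t)^2$ and $\omega''(t)$ separately instead of expanding the product $-\omega''(t)\omega'(t)^2$ as the paper does; both yield the same uniform lower bound $-\omega''\omega'^2\ge\overline{\kappa}>0$.
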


\begin{proof}
By direct computation, one obtains
\begin{equation}\nonumber
-\omega''(t)\,\omega'(t)^2 =  
\kappa(1+\theta)\theta \left( t^{\theta-1}-2\kappa(1+\theta)t^{2\theta-1}+\kappa^2(1+\theta)^2t^{3\theta-1} \right).
\end{equation}
Hence, by choosing (and fixing hereafter) $1/2 \leq \theta \leq 1$, we obtain
\begin{equation}\nonumber
\begin{array}{ccl}
-\omega''(t)\,\omega'(t)^2 & \geq & 
\kappa(1+\theta)\theta \left( 1-2\kappa(1+\theta)\right) \\[0.2cm]
 & \geq & \dfrac{4\kappa}{3} \left( 1-4\kappa\right)=: \overline{\kappa}>0,
\end{array}
\end{equation}
provided $\kappa < 1/4$. In view of this and $\omega'(t) \leq 1$, we obtain  
$$
{aL^3 \omega''(t)\omega'(t)^2 + bL^2 \omega'(t)^2 < -a\overline\kappa L^3+bL^2.}
$$
Then, we select $\overline L$ large such that estimate \eqref{keq} holds for every $L\geq \overline L$. The first and third estimates in \eqref{keq2} follow immediately. We conclude the proof by observing that
$$
\omega'(t) \geq 1-\kappa(1+\theta) \geq 1-2\kappa \geq \frac{1}{2}. 
$$

\end{proof}

\section{Proof of the main Theorem} 

In this final section, we prove Theorem \ref{mainthm}. For simplicity, we take $\Omega=B_1$ and $\Omega' = B_{1/2}$. The strategy is to show that for some 
$$
	L=L \left( \|f_+\|_{L^\infty(B_1)},\|f_-\|_{L^\infty(B_1)} , \Lambda \right) \gg 1 \quad \text{ and } \quad \varrho=\varrho \left( \|u\|_{L^\infty(B_1)} \right) >0,
$$ 
to be chosen later, and for any $z_0 \in B_{1/2}$ fixed, there must hold
\begin{equation}\label{comp0}
	\sup\limits_{(x,y) \in B_{1/2}(z_0) \times B_{1/2}(z_0)}  \left[ u(x) - u(y) \right] \le L  \omega(|x-y|) + \varrho \left( |x-z_0|^2+|y-z_0|^2 \right).
\end{equation}
Estimate \eqref{comp0} clearly implies that $u$ is $\left (L + \varrho \right )$-Lipschitz continuous at $z_0$. For simplicity, hereafter in the proof, let us take $z_0 = 0$. 

We will achieve \eqref{comp0} by proving that the existence of a pair of points  $(x_0,y_0) \in \overline{B_{1/2}} \times \overline{B_{1/2}}$ verifying
\begin{equation}\label{comp1}
u(x_0)-u(y_0)-L  \omega(|x_0-y_0|)-\varrho \left( |x_0|^2+|y_0|^2 \right) > 0
\end{equation}
enforces a universal limitation upon the constant $L$. Hence, the reasoning starts by assuming \eqref{comp1}, which readily implies that $x_0 \neq y_0$ and that
\begin{equation}\label{intest}
\varrho  \left( |x_0|^2+|y_0|^2 \right) \leq 2 \|u\|_{L^{\infty}(B_1)}.
\end{equation}
Thus, in order to guarantee that $x_0, y_0$ are interior points in $B_{1/2}$, we just need to select, once and for all, 
$$
\varrho := 9  \|u\|_{L^{\infty}(B_1)}.
$$

Next, we note that $\omega$ is twice continuously differentiable in a small neighborhood of $\eta:=|x_0-y_0|>0$, and thus Lemma \ref{ishii} guarantees the existence of 
$$
(\xi_x, M_x) \in \overline{J}^{2,+}_{B_{1/2}}u(x_0) \qquad {\rm and} \qquad (\xi_y, M_y) \in \overline{J}^{2,-}_{B_{1/2}}u(y_0)
$$ 
satisfying
\begin{equation}\label{finaleq1}
\langle M_x\xi_x,\xi_x\rangle - \langle M_y\xi_y,\xi_y \rangle \leq aL^3  \omega''(\eta)\omega'(\eta)^2+b\,L^2  \omega'(\eta)^2+d,
\end{equation}
for universal positive parameters $a,b$ and $d$. We have further used the fact that $\omega'' (\eta)<0$. Hence, by \eqref{finaleq1} and Proposition \ref{barrier}, given $K>0$ there exists $\overline L \gg 1$, such that 
\begin{equation}\label{finaleq2}
\langle M_x\xi_x,\xi_x\rangle - \langle M_y\xi_y,\xi_y \rangle <-K,
\end{equation}
for all $L \gg \overline L$.

\medskip
In what follows, we want to prove that 
$$
x_0 \in \mathcal{R}(u) \cup \{u>0\} \quad \mbox{and} \quad y_0 \in \mathcal{R}(u) \cup \{u<0\},$$
and, in addition, that
$$
\{x_0,y_0\} \cap \mathcal{R}(u) \neq \emptyset \quad \mbox{and} \quad \{x_0,y_0\} \cap \mathcal{R}(u) \neq \{x_0,y_0\}.
$$
 
For that purpose, we initially note that \eqref{comp1} yields
\begin{equation}\label{finalest2}
u(x_0)-u(y_0) > 0.
\end{equation}
Hence, if $x_0$ were to be in $\{u<0\}$, then  $y_0$ would necessarily also belong to $\{u<0\}$. However, combining the fact that $u$ solves $-\Delta_\infty u = f_-$ in its negative phase with \eqref{finaleq2}, we should have
\begin{equation}\label{finaleqA}
-f_-(x_0) + f_-(y_0) <-K, 
\end{equation} 
which yields a contradiction by choosing $\overline L$ universally large such that $K \geq 4\|f_-\|_{L^\infty(B_1)}$. 

Arguing similarly, if one assumes $y_0 \in \{u>0\}$, then $x_0$ would also have to be in $\{u>0\}$, and the same reasoning employed above would lead to a contradiction, choosing $\overline L$ such that $K \geq 4\|f_+\|_{L^\infty(B_1)}$. Likewise, for $K \geq 4 \max\{\|f_+\|_{L^\infty(B_1)},\|f_-\|_{L^\infty(B_1)}\}$, the case $\{x_0,y_0\} \cap \mathcal{R}(u) = \emptyset$ is ruled out. Finally, from \eqref{finalest2} we easily conclude that $\{x_0,y_0\} \cap \mathcal{R}(u) \neq \{x_0,y_0\}$. 

\medskip

We are now left with two cases to investigate. The following picture gives an enlightening view of the subsequent analysis. 

\bigskip
\medskip

\centerline{\psscalebox{1.35 1.35} 
{
\begin{pspicture}(0,-2.2265325)(7.3544188,2.2265325)
\definecolor{colour0}{rgb}{0.6,0.6,0.6}
\definecolor{colour1}{rgb}{0.96862745,0.96862745,0.96862745}
\definecolor{colour2}{rgb}{0.4,0.4,0.4}
\definecolor{colour3}{rgb}{0.8,0.8,0.8}
\psframe[linecolor=colour0, linewidth=0.062, fillstyle=solid,fillcolor=colour1, dimen=outer](7.3471565,1.8745947)(0.004166668,-1.8087386)
\psbezier[linecolor=colour2, linewidth=0.044, fillstyle=solid,fillcolor=colour3](7.300969,-1.6347036)(7.302879,-0.8530342)(5.803756,0.084505685)(4.9398313,-0.26150033186541466)(4.0759068,-0.60750633)(7.330422,0.52590984)(7.319385,1.6374248)(7.308348,2.7489398)(7.299059,-2.416373)(7.300969,-1.6347036)
\rput[bl](0.31798726,1.4924241){$\scriptstyle u>0$}
\rput[bl](6.4556174,-1.5959479){$\scriptstyle u<0$}
\rput[bl](1.4776772,0.5817708){$\scriptstyle x_0$}
\psdots[linecolor=black, dotsize=0.08](1.6411545,0.83608973)
\psdots[linecolor=black, dotsize=0.08](4.0410686,-0.6892136)
\rput[bl](2.4587624,0.034063153){$\scriptstyle y_0$}
\rput[bl](4.78874,-0.57936984){$\scriptstyle x_0$}
\psline[linecolor=colour0, linewidth=0.02](2.0178986,0.60935664)(2.6149197,1.2948052)
\psdots[linecolor=colour0, dotsize=0.06](1.9985963,0.59268993)
\rput[bl](3.9725807,-0.9434439){$\scriptstyle y_0$}
\rput[bl](2.3710437,0.754938){$\scriptstyle \rho$}
\rput[bl](1.2421536,-2.2265325){\scriptsize{point of maximum}}
\rput[bl](1.174706,2.0465324){\scriptsize{nonsingular point}}
\rput[bl](6.0717335,1.3530458){$\scriptstyle \mathcal{R}(u)$}
\rput[bl](5.458322,-2.1942923){$\scriptstyle \rho \sim \|u\|_\infty$}
\psbezier[linecolor=colour2, linewidth=0.044, fillstyle=solid,fillcolor=colour3](0.049695976,1.7603675)(0.04780361,0.9812071)(1.2913101,-0.16196814)(2.1473076,0.18292727972971648)(3.0033052,0.5278227)(0.020513104,-0.4607752)(0.031448863,-1.5687225)(0.04238462,-2.6766698)(0.05158834,2.539528)(0.049695976,1.7603675)
\psbezier[linecolor=colour2, linewidth=0.044](2.2963612,0.24951221)(3.251013,0.6145931)(3.6785223,-0.68896157)(4.8003044,-0.3166123067602382)
\psdots[linecolor=black, dotsize=0.08](2.530623,0.30965564)
\psdots[linecolor=black, dotsize=0.08](4.808275,-0.31547174)
\pscircle[linecolor=colour0, linewidth=0.02, dimen=outer](4.382611,-0.40575412){1.010238}
\psframe[linecolor=colour0, linewidth=0.068, dimen=outer](7.3544188,1.8662614)(0.0,-1.8170719)
\psline[linecolor=black, linewidth=0.02, arrowsize=0.05291667cm 2.0,arrowlength=1.4,arrowinset=0.0]{->}(2.6477797,-1.9125177)(1.6989175,0.5162835)
\rput[bl](6.599689,-0.10109619){$\scriptstyle u=0$}
\pscircle[linecolor=colour0, linewidth=0.02, dimen=outer](1.9521788,0.5291296){1.010238}
\psline[linecolor=black, linewidth=0.02, arrowsize=0.05291667cm 2.0,arrowlength=1.4,arrowinset=0.0]{->}(6.0755367,1.3468535)(4.2804437,-0.33144012)
\psline[linecolor=black, linewidth=0.02, arrowsize=0.05291667cm 2.0,arrowlength=1.4,arrowinset=0.0]{->}(2.7244823,1.9707191)(3.9837909,-0.5671464)
\rput[bl](0.23799834,-0.081950136){$\scriptstyle u=0$}
\psline[linecolor=black, linewidth=0.02, arrowsize=0.05291667cm 2.0,arrowlength=1.4,arrowinset=0.0]{->}(2.726334,1.9682157)(1.7641648,0.9526795)
\psline[linecolor=black, linewidth=0.02, arrowsize=0.05291667cm 2.0,arrowlength=1.4,arrowinset=0.0]{->}(2.6470819,-1.9228915)(2.5530076,-0.01657915)
\end{pspicture}
}}

\bigskip
\medskip

\noindent {\bf Case 1.} Suppose $x_0 \in \{u>0\}$ and $y_0 \in \mathcal{R}(u)$. From this and estimate \eqref{finaleq2}, we note that
\begin{equation}\label{coimbra} 
-\langle M_y \xi_y,\xi_y \rangle < -K + \|f_+\|_{L^{\infty}(B_1)} \leq -\frac K 2 < 0,
\end{equation}
according to our previous choice for $K$.  

Since $(\xi_y,M_y) \in \overline{J}^{2,-}u(y_0)$, we are not able to apply directly the free boundary condition given in Definition \ref{viscositysol}. In order to address this issue, we take sequences 
$$
y_k \to y_0, \quad \xi_k \to \xi_y \quad \mbox{and} \quad M_k \to M_y   \qquad (\mbox{as } k\to \infty),
$$
such that $(\xi_k,M_k) \in {J}^{2,-}u(y_k)$. We have
\begin{equation}\label{jampa}
	u(x) \geq \left\langle \xi_k , x-y_k \right\rangle + \frac{1}{2} \left\langle M_k (x-y_k), (x-y_k)\right\rangle + o \left( |x-y_k|^2 \right),
\end{equation} 
for $x$ sufficiently close to $y_k$. Now, we claim $y_k \in \mathcal{R}(u)$. Suppose otherwise that $y_k \in \{u>0\} \cup \{u<0\}$. Then, by Definition \ref{viscositysol}, stability and \eqref{coimbra}, estimate
\begin{equation}\label{orlando} 
\min\{f_+(y_k),f_-(y_k)\} \leq -\langle M_{k} \xi_k,\xi_k \rangle \leq -\frac K 4
\end{equation}
would hold for $k\gg 1$. By taking $K \gg \max\{\|f_+\|_{\infty},\|f_-\|_{\infty}\}$, we reach a contradiction.

Also, notice that, by \eqref{coimbra}, we have $\xi_k \neq 0$.
Therefore, applying the free boundary condition at $y_k$, together with \eqref{jampa} and \eqref{coimbra}, we obtain, for points $x=y_k + t\, \xi_k/|\xi_k|$, the estimates
\begin{equation}\nonumber
\begin{array}{rcl}
\Lambda t+o(t) & \geq &  t|\xi_k|+ \dfrac{t^2}{2|\xi_k|^2}\langle M_k \xi_k,\xi_k\rangle  + o(t^2) \\[0.45cm]
 & \geq & t|\xi_k|+ o(t^2),
\end{array}
\end{equation}
for each $t>0$ sufficiently small. Finally, dividing by $t$, letting $t \to 0$ and subsequently $k \to \infty$, we get
\begin{equation}\label{fbcond}
\Lambda \geq |\xi_y|.
\end{equation}

On the other hand, from \eqref{grad2}, we obtain 
$$
	\xi_y = L\, \omega'(\eta)\eta^{-1}(x_0-y_0)-2\varrho \,y_0,
$$ 
and hence, from estimate \eqref{keq2}, we know there holds 
\begin{equation}\label{lastone}
|\xi_y| \geq \frac{L}{2}-2\varrho.
\end{equation} 
Thus, plugging \eqref{lastone} into \eqref{fbcond}, we reach to a contradiction by choosing $L$ universally large, now depending also on $\Lambda$.  
 
\medskip 

\noindent {\bf Case 2.} Suppose, alternatively, that  $y_0 \in \{u<0\}$ and $x_0 \in \mathcal{R}(u)$. In this case, inequality \eqref{finaleq2} provides  
\begin{equation}\label{finalest3a}
\langle M_x \xi_x,\xi_x \rangle < -K + \|f_-\|_{L^{\infty}(B_1)} \leq -\frac K 2 <0, 
\end{equation}
for $K$ sufficiently large. Since $(\xi_x,M_x) \in \overline{J}^{2,+}u(x_0)$, we guarantee the existence of sequences
$$ 
x_j \to x_0, \quad \xi_j \to \xi_x \quad \mbox{and} \quad M_j \to M_x     \qquad (\mbox{as } j\to \infty),
$$
such that $(\xi_j,M_j) \in {J}^{2,+}u(x_j)$. Then
\begin{equation}\label{jampa2}
	u(x) \leq \left\langle \xi_j , x-x_j \right\rangle + \frac{1}{2} \left\langle M_j (x-x_j), (x-x_j)\right\rangle + o \left(|x-x_j|^2 \right),
\end{equation}
for any $x$ sufficiently close to $x_j$. Similarly to case 1, we have $x_j \in \mathcal{R}(u)$, for $j \gg 1$. Indeed, if $u(x_j)\neq 0$, we would have, by Definition \ref{viscositysol}, stability and \eqref{finalest3a} that
\begin{equation}\label{orlando2}
\min\{-f_+(x_j),-f_-(x_j)\} \leq \langle M_j \xi_j,\xi_j \rangle \leq -\frac K 4,
\end{equation}
reaching a contradiction by taking $K\gg \max\{\|f_+\|_\infty,\|f_-\|_\infty\}$. 

Still as in case 1, we observe from \eqref{finalest3a} that $\xi_j \neq 0$. Therefore, from Definition \ref{viscositysol}, \eqref{jampa2} and \eqref{finalest3a}, we have, for $x=x_j - t\, \xi_j/|\xi_j|$,
\begin{equation}\nonumber
\begin{array}{rcl}
	-\Lambda t+o(t) & \leq &   - t|\xi_j|+ \dfrac{t^2}{2|\xi_j|^2}\langle M_j \xi_j,\xi_j \rangle + o(t^2) \\[0.45cm] 
	& \leq & - t|\xi_j|+ o(t^2), \\
\end{array}
\end{equation}
for $t>0$ sufficiently small. Dividing by $t$, letting $t \to 0$ and then $j \to \infty$, we conclude that $|\xi_x| \leq \Lambda$.  

On the other hand, from \eqref{grad1} we have 
 $$
 	\xi_x= L\, \omega'(\eta)\eta^{-1}(x_0-y_0)+2\varrho \,x_0
$$  
and thus, from Proposition \ref{barrier}, we estimate 
\begin{equation}\label{finalest31}
	\Lambda \geq |\xi_x| \geq \frac{L}{2}-2\varrho.
\end{equation}
Finally, by taking $L$ universally large,
we again reach a contradiction. Thus \eqref{comp1} can not hold and the proof of Theorem \ref{mainthm} is complete. 
 
\qed

\bigskip

{\small \noindent{\bf Acknowledgments.} DJA supported by CNPq grant 427070/2016-3 and grant 2019/0014 from Para\'\i ba State Research Foundation (FAPESQ). 

JMU partially supported by FCT -- Funda\c c\~ao para a Ci\^encia e a Tecnologia, I.P., through grant SFRH/BSAB/150308/2019 and projects PTDC/MAT-PUR/28686/2017 and UTAP-EXPL/MAT/0017/2017, and by the Centre for Mathematics of the University of Coimbra - UIDB/00324/2020, funded by the Portuguese Government through FCT/MCTES.

DJA and JMU thank the hospitality of the University of Central Florida, and DJA thanks the Abdus Salam International Centre for Theoretical Physics, where parts of this work were conducted.}

\bigskip

\bibliographystyle{amsplain, amsalpha}

\end{document}